\newcommand{\N}{\mathbb{N}}
\newcommand{\Z}{\mathbb{Z}}
\newcommand{\R}{\mathbb{R}}
\newcommand{\C}{\mathbb{C}}
\newcommand{\CP}{\mathbb{C}P}
\newcommand{\K}{K\"{a}hler\ }
\renewcommand{\epsilon}{\varepsilon}
\newcommand{\Vol}{\operatorname{Vol}}
\newcommand{\Proj}{\mathbb P}
\newtheorem{theor}{Theorem}[section]
\newtheorem{lem}[theor]{Lemma}
\newtheorem{cor}[theor]{Corollary}
\newtheorem{ex}[theor]{Example}
\newtheorem{rmk}[theor]{Remark}
\begin{document}

\title[Some characterizations of the complex projective space]{Some characterizations of the complex projective space  via Ehrhart polynomials} 

\author{Andrea Loi}
\address{(Andrea Loi) Dipartimento di Matematica \\
         Universit\`a di Cagliari, Via Ospedale 72, 09124  (Italy)}
         \email{loi@unica.it}

\author{Fabio Zuddas}
\address{(Fabio Zuddas) Dipartimento di Matematica \\
         Universit\`a di Cagliari, Via Ospedale 72, 09124  (Italy)}
         \email{ fabio.zuddas@unica.it}

\thanks{
The authors were supported  by STAGE - Funded by Fondazione di Sardegna and by INdAM GNSAGA - Gruppo Nazionale per le Strutture Algebriche, Geometriche e le loro Applicazioni.
}

\subjclass[2000]{53C55, 32Q15, 32T15.} 
\keywords{polarized manifold, toric manifolds; Delzant polytope; asymptotically Chow semistability; cscK metric; regular quantization.}

\begin{abstract}
Let $P_{\lambda\Sigma_n}$  be the Ehrhart polynomial associated to an intergal multiple $\lambda$  of the standard symplex $\Sigma_n\subset \R^n$.
In this paper we  prove that if $(M, L)$ is an $n$-dimensional   polarized toric manifold with associated Delzant polytope $\Delta$ and Ehrhart polynomial $P_\Delta$ such that $P_{\Delta}=P_{\lambda\Sigma_n}$, for some 
$\lambda\in\Z^+$, then $(M, L)\cong (\C P^n, O(\lambda))$ (where $O(1)$ is the hyperplane bundle on  $\CP^n$) in the following three cases: 1. arbitrary $n$ and $\lambda=1$, 2. $n=2$ and $\lambda =3$, 3. $\lambda =n+1$ under the assumption that the polarization $L$ is asymptotically Chow semistable.

\end{abstract}
 
\maketitle

\tableofcontents  

\section{Introduction}
A beautiful fundamental result \cite{Fu} in the theory of toric manifolds states that a pair $(M,L)$ given by a (smooth) compact $n$-dimensional toric manifold $M$ and a very  ample line bundle $L$ on $M$ (i.e. $M$ a polarized toric manifold with polarization $L$) is described combinatorially by a convex polytope $\Delta \subseteq \R^n$, called the {\em Delzant polytope},  having vertices in $\Z^n$ (from now on, we will call such a polytope a {\em lattice polytope}). 
More precisely, in order to represent a smooth polarized toric manifold a lattice  polytope must satisfy the so-called {\it Delzant condition} (cfr. \cite{guillemin}):

\begin{enumerate}
\item[(i)] there are $n$ edges meeting at each vertex $p$;
\item[(ii)] each edge is of the form $p + t v_i$, $i=1, \dots, n$,  where $v_i \in \Z^n$;
\item[(iii)] $v_1, \dots, v_n$  can be chosen to be a basis of $\Z^n$.
\end{enumerate}

Notice that if  $\Delta, \tilde\Delta \subseteq \R^n$ are two {\em unimodularly equivalent} Delzant polytopes, i.e. there exists a matrix $A \in SL_n(\Z)$ and an integral vector $c \in \Z^n$ such that $\tilde \Delta = A(\Delta) + c$, then they represent, up to isomorphism, the same polarized toric manifold.

In order to find some numerical invariants of  a polarized toric manifold  $(M, L)$ one considers 
the polarized toric manifolds $(M, L^m)$, where $L^m$
denotes the $m$-th tensor power of $L$ ($m$ is a natural number). In this case  $(M, L^m)$ corresponds to the dilated polytope $m \Delta$
and the dimension of the space $H^0(M,L^m)$ of global  holomorphic sections of $L^m$   equals the number $\sharp(m\Delta \cap \Z^n)$ of the lattice points belonging to $m \Delta$. By the celebrated Ehrhart theory of convex polytopes, it is known that this number is given by a polynomial

\begin{equation}\label{polEr}
P_{\Delta}(m) = A_n m^n + A_{n-1} m^{n-1} + \cdots + A_1 m + 1
\end{equation}

of degree $n$ in $m$ called the  {\it Ehrhart polynomial of $\Delta$}. 
We refer the reader to \cite{libro} for the meaning of the coefficients $A_j$ (which are independent of $m$ and depend only on the non-dilated $\Delta$). Here we just recall that the leading coefficient $A_n$ equals the euclidean volume $\Vol (\Delta)$ 
of $\Delta$ which in turn is related to the Riemannian volume $\Vol (M)$ of $M$ (with respect to any \K metric $\omega \in c_1(L)$) by 
\begin{equation}\label{volumes}
A_n=\Vol(\Delta) = (2 \pi)^{-n} \Vol(M).
\end{equation}

The prototype and fundamental  example of polarized toric manifold  is the complex projective space equipped with its hyperplane bundle described in the following example.

\begin{ex}\rm\label{expro}
Let us denote by $\Sigma_n$  the $n$-dimensional standard simplex given by the convex hull of $\bar 0, e_1, e_2, \dots, e_n$ (being $e_1, e_2, \dots, e_n$ the canonical basis of $\R^n$). The corresponding polarized toric manifold is $\C P^n$ endowed with the hyperplane bundle $O(1)$.
The Ehrhart polynomial of $\Sigma_n$ is
$$P_{\Sigma_n}(m)= \frac{(m+1)(m+2) \cdots (m+n)}{n!}$$
In particular, in this case $A_n = \frac{1}{n!}$.
Given  $\lambda \in \Z^+$, the convex polytope $\lambda \Sigma_n$ 
representing $(\C P^n, O(\lambda))$ endowed with the $\lambda$-th power of  $O(1)$, has then associated  polynomial 
\begin{equation}\label{projlambda}
P_{\lambda \Sigma_{n}}(m)= \frac{(\lambda m+1)(\lambda m+2) \cdots (\lambda m+n)}{n!}.
\end{equation}
\end{ex}

It is natural and  interesting  to analyze to what extent the Ehrhart polynomial $P_{\Delta}(m)$ determines $\Delta$ and hence the corresponding  polarized toric manifold (see for example Open Problems in Chapter 3 of \cite{libro}, and also \cite{Erbe}, \cite{Haa}, \cite{HibiTsu}).
For this reason one says that two Delzant polytopes $\Delta$ and $\tilde\Delta$ are {\it Ehrhart equivalent} and we write $\Delta\sim\tilde\Delta$, if  their Ehrhart polynomials are equal, i.e. $P_{\tilde \Delta}(m) =P_{\Delta}(m)$.
Of course two  unimodularly equivalent Delzant polytopes $\Delta, \tilde\Delta \subseteq \R^n$ are  Ehrhart equivalent but not viceversa.
However, one can prove (\cite{Haa}, Proposition 4.3) that in that case $\Delta$ and $\tilde \Delta$ are {\it equidecomposable}: this means that there exists two decompositions of $\Delta$ and $\tilde \Delta$ as finite union of lattice polytopes $\Delta = D_1 \cup \cdots \cup D_k$,  $\tilde \Delta = \tilde D_1 \cup \cdots \cup \tilde D_k$ (intersecting at most along their boundaries) such that $D_j$ and $\tilde D_j$ are unimodularly equivalent (with associated matrix $A_j$ and vector $c_j$ depending on $j$).

In   this paper we address the following natural question.

\vskip 0.3cm

\noindent
{\bf Question 1.} {\it Let $\Delta \subseteq \R^n$ be the Delzant polytope representing a smooth  polarized toric $n$-dimensional manifold $(M, L)$. 
Assume that  $\Delta\sim\lambda\Sigma_n$, for some $\lambda\in \Z^+$, where $\Sigma_n$ is given in Example \ref{expro}.
Under which conditions  on the polarization $(M, L)$ can one deduce $\Delta=\lambda\Sigma_n$ or equivalently  $(M, L) \simeq (\C P^n, O(\lambda))$?} 
\vskip 0.3cm

The following two examples show the necessity of imposing some extra conditions in order to achieve the conclusion of Question 1.

\begin{ex}\rm\label{primaprop}
For any  natural number $n$, let $\lambda$ be the smallest common divisor of $2, 3, \dots, n$ 
and consider the $n$-dimensional polarized manifold $(M, L)$  given by the product of $n$-copies of the one-dimensional complex projective space $\CP^1$ where the $j$-th factor has polarization $O(\frac{\lambda}{j})$
with $j=1, \dots , n$, namely
$$(M,L) = \left(\C P^1 \times \cdots \times \C P^1, O\left( \lambda \right) \times O\left( \frac{\lambda}{2} \right) \times \cdots \times O \left( \frac{\lambda}{n} \right)\right).$$

Notice that  the Ehrhart polynomial of the one-dimensional symplex $\left[ 0, \frac{\lambda}{j} \right]$, which represents $\left( \C P^1, O\left( \frac{\lambda}{j} \right) \right)$,  is given by $\frac{\lambda}{j} m + 1$, $j = 1, \dots, n$. Thus, if $\Delta$ is the polytope rappresenting $(M, L)$ and one uses  the general fact that if $\Delta_1$ (resp. $\Delta_2$) is the polytope representing the polarized toric $(M_1, L_1)$ (resp. $(M_2, L_2)$) then $\Delta_1 \times \Delta_2$ represents 
$(M_1 \times M_2, L_1 \times L_2)$ 
one gets
$$P_\Delta (m)= \left( \lambda m + 1 \right) \left( \frac{\lambda}{2} m + 1 \right) \cdots \left( \frac{\lambda}{n} m + 1 \right)= \frac{(\lambda m +1) (\lambda m + 2) \cdots (\lambda m + n)}{n!}$$
and hence, by \eqref{projlambda}, $\Delta\sim\lambda\Sigma_{n}$.
Notice that by taking $n=2$ and $\lambda=2$ 
we have that the polytope $2 \Sigma_2$ representing $(\CP^2, O(2))$ is Ehrhart  equivalent to the polytope representing $(\C P^1 \times \C P^1, O(2) \times O(1))$, i.e. the rectangle with vertices $(0,0), (2,0), (0,1), (2,1)$. In fact, it is easy to show that the polytopes are equidecomposable as defined above.
\end{ex}

\begin{ex}\rm\label{primapropbis}
For another polytope  Ehrhart equivalent to $2\Sigma_2$, take the convex hull $\Delta$ of the vertices $(0,0), (0,1), (1,1), (3,0)$. In fact, it is easy  to see that 
the $P_\Delta =2m^2+3m+1=P_{2\Sigma_2}$, where the last equality follows by \eqref{projlambda}.
This polytope is well-known in the theory of toric surfaces as it represents a very ample line bundle on the so-called {\it Hirzebruch surface} $F_2$, defined as the projectivized line bundle $\Proj(\C + O(-2))$ on $\C P^1$ (the general Hirzebruch surface $F_n$\footnote{The importance of Hirzebruch surfaces in toric geometry lies in the result that all compact toric surfaces are obtained from $\C P^2$ or a Hirzebruch surface by blowing ups (in points fixed by the torus action). Let us notice that Hirzebruch surfaces $F_n$, $n \geq 2$, are not Fano. Indeed the only toric Fano surfaces are $\C P^2, \C P^1 \times \C P^1$ and the blowups of $\C P^2$ at one, two or three points fixed by the torus action; $F_1$ coincides with the blow up of $\C P^2$ at one point.} is just the projectivized line bundle $\Proj(\C + O(-n))$ on $\C P^1$, represented by the polytope having vertices $(0,0), (0,1), (1,1), (n+1,0)$).
\end{ex}

The main result of this paper is the following theorem where we prove that Question 1 has an affirmative answer in three interesting cases: arbitrary $n$ and $\lambda=1$ (hyperplane bundle), $n=2$ and $\lambda=3$
(namely  a polarization of a toric surface  with the same Ehrhart polynomial  of 
anticanonical bundle over $\CP^2$) and  finally arbitrary $n$,  $\lambda =n+1$ (anticanonical bundle) under the additional assumption that the polarization $(M, L)$ is asymptotically Chow semistable (in the sequel ACsemiS) (see Section \ref{proofs}  for details).
\begin{theor}\label{mainteor}
Let $\Delta \subseteq \R^n$ be the Delzant polytope representing a polarized toric $n$-dimensional manifold $(M, L)$. 
The then following facts hold true.
\begin{itemize}
\item [(i)]
if $\Delta\sim\Sigma_n$ then $(M,L) \simeq (\C P^n, O(1))$;
\item [(ii)]
if $\Delta\sim 3\Sigma_2$ then $(M,L) \simeq (\C P^2, O(3))$;
\item[(iii)]
if $\Delta\sim (n+1)\Sigma_n$ and $(M,L)$ is a ACsemiS then  
$(M,L)\simeq (\C P^n, O(n+1))$.
\end{itemize}
\end{theor}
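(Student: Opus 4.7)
The plan is to tackle the three cases in increasing difficulty, using a counting argument in (i), a Riemann–Roch reduction to a classification in (ii), and the same reduction supplemented by a stability-theoretic volume bound in (iii). For (i), evaluating the identity $P_{\Delta}=P_{\Sigma_n}$ at $m=1$ yields $\#(\Delta\cap\Z^n)=n+1$. Any full-dimensional lattice polytope has at least $n+1$ vertices, each a lattice point, so $\Delta$ is a lattice simplex whose only lattice points are its vertices. Its Euclidean volume equals the leading coefficient $A_n=1/n!$, the smallest possible volume of an $n$-dimensional lattice simplex, which forces $\Delta$ to be a unimodular simplex. Since all such simplices lie in a single $\mathrm{SL}_n(\Z)$-orbit up to translation, $\Delta$ is unimodularly equivalent to $\Sigma_n$ and $(M,L)\cong(\CP^n,O(1))$.

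For (ii) I would exploit the fact that a smooth toric surface has $\chi(\mathcal{O}_M)=1$ and (Demazure vanishing) $h^i(L^m)=0$ for $i>0$ and $m\ge 0$, so Hirzebruch–Riemann–Roch gives
\[
P_{\Delta}(m)=\chi(L^m)=\frac{L^2}{2}m^2+\frac{L\cdot(-K_M)}{2}m+1.
\]
Matching with $P_{3\Sigma_2}(m)=\frac{9}{2}m^2+\frac{9}{2}m+1$ yields $L^2=9$ and $L\cdot(-K_M)=9$, hence $L\cdot(L+K_M)=0$. On the other hand, Ehrhart reciprocity applied to $P_{3\Sigma_2}$ gives $\#(\mathrm{int}(\Delta)\cap\Z^2)=P_{3\Sigma_2}(-1)=1$, which by the toric description of the adjoint bundle equals $h^0(K_M+L)$. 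So $K_M+L$ is either trivial or represented by a non-zero effective divisor; the second option is ruled out because $L$ is ample (so $L\cdot D>0$ for every non-zero effective divisor $D$). Therefore $L=-K_M$, and $(M,L)$ is a smooth toric del Pezzo surface with $(-K_M)^2=9$. The short classification of smooth toric Fano surfaces ($\CP^2,\CP^1\times\CP^1,F_1,\mathrm{Bl}_2\CP^2,\mathrm{Bl}_3\CP^2$, with anticanonical degrees $9,8,8,7,6$) then forces $M\cong\CP^2$ and $L\cong O(3)$.

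For (iii) the same HRR argument in dimension $n$ gives $A_n=L^n/n!$ and $A_{n-1}=L^{n-1}\cdot(-K_M)/(2(n-1)!)$; comparing with the explicit expansion of $P_{(n+1)\Sigma_n}$ yields $L^n=L^{n-1}\cdot(-K_M)=(n+1)^n$, hence $L^{n-1}\cdot(L+K_M)=0$. The interior lattice point count remains $1$, and since $L$ ample implies $L^{n-1}\cdot D>0$ for every non-zero effective divisor $D$, the same dichotomy forces $L=-K_M$. Hence $(M,L)$ is a smooth toric Fano $n$-fold of anticanonical degree $(n+1)^n$. I would conclude by invoking the sharp volume bound $(-K_M)^n\le(n+1)^n$ for asymptotically Chow semistable Fano manifolds (via K.\ Fujita's inequality for K-semistable Fanos, together with the toric fact that ACsemiS of $(M,-K_M)$ suffices to apply it), whose equality case yields $M\cong\CP^n$ and therefore $L=-K_{\CP^n}=O(n+1)$. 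The main obstacle is this final step: it requires importing a genuinely stability-theoretic sharp inequality and establishing the precise link between the assumed asymptotic Chow semistability and the K-semistability hypothesis used in its proof; parts (i) and (ii) are, by comparison, essentially self-contained.
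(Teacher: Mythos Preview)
Your arguments for (i) and (ii) are correct, and (iii) is correct modulo the implication you flag; but the routes in (ii) and (iii) differ substantially from the paper's. For (i) the paper argues directly from $\Vol(\Delta)=1/n!$ and the Delzant condition at a vertex, which is close in spirit to your lattice-point count. For (ii) the paper stays purely combinatorial: it observes $\Vol(\Delta)=9/2$ and, via Ehrhart--MacDonald reciprocity, that $\Delta$ has exactly one interior lattice point, then invokes the sharp area bound for lattice polygons with a single interior point (Balletti), whose equality case is $3\Sigma_2$. Your Riemann--Roch/adjunction argument reaching $L=-K_M$ and then the del Pezzo classification is a nice alternative that trades a polytope inequality for a short list of surfaces. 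For (iii) the paper again avoids leaving the toric world: it first shows $\Delta$ is reflexive via Hibi's criterion $P_\Delta(m)=(-1)^nP_\Delta(-m-1)$ (so $L=-K_M$, matching your conclusion), and then uses the ACsemiS hypothesis through Ono's toric criterion, namely the identity $P_\Delta^{x}(m)=P_\Delta(m)\,b_\Delta$ for the weighted lattice-point sum; evaluating at $m=-1$ with Brion--Vergne reciprocity forces $b_\Delta=\bar 0$. The conclusion then comes from the Nill--Paffenholz volume bound $\Vol(\Delta)\le (n+1)^n/n!$ for polytopes with barycenter at the origin contained in the dual of a lattice polytope, with equality only for $(n+1)\Sigma_n$. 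This completely sidesteps the obstacle you identify: there is no need to pass through K-semistability or Fujita's inequality, because Ono's result converts ACsemiS directly into the barycenter condition that feeds the combinatorial bound. Your approach would also work once one cites that asymptotic Chow semistability implies K-semistability together with Fujita's sharp volume inequality, but the paper's path is shorter and self-contained within toric/convex geometry.
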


Notice that  the assumption of  ACsemiS  of $(M, L)$  in (iii) of Theorem \ref{mainteor} 
cannot be dropped (see Example \ref{exnc} in Section \ref{secEhrart} below).

The paper is organized as follows. In Section \ref{secEhrart} we  recall some  basic facts on reflexive polytopes and barycenters needed in the proof of Theorem \ref{mainteor}.
In Section \ref{proofs} after recalling various definitions of stability for a given polarization,  their links  with the regular  quantization of \K manifolds and the theory of balanced metrics in Donaldsons' terminology, we prove Theorem \ref{mainteor}.

\section{Reflexive polytopes and barycenters}\label{secEhrart}
An important notion in the theory of lattice poytopes is that of {\it dual polytope}.
Let us recall that given a convex polytope $\Delta \subseteq \R^n$ such that the origin $\bar 0$ is contained in its interior, the dual polytope $\Delta^*$ is defined as

$$\Delta^* = \{ x \in \R^n \ | \ \langle x, y \rangle \leq 1 \ \textrm{for any} \  y \in \Delta \}$$

Equivalently, one has 

$$\Delta^* = \{ x \in \R^n \ | \ \langle x, v \rangle \leq 1 \ \textrm{for any vertex} \  v \ \textrm{of} \  \Delta \}$$
(the equivalence is easily seen by using the fact that $\Delta$ is the convex hull of its vertices). Notice that  $(\Delta^*)^* = \Delta$.

\begin{ex}\rm\label{exdual}
Take the simplex\footnote{It is just the simplex $\lambda \Sigma_2$ having as vertices $(0,0), (\lambda, 0), (0, \lambda)$ translated by the vector $(-1,-1)$ in order to have the origin in its interior as required by the definition of dual.} $\Sigma$ in $\R^2$ given by the convex hull of its vertices $v_1 = (-1,-1), v_2 = (\lambda -1, -1), v_3 = (-1, \lambda - 1)$.
It is easy to see that the vertices of $\Sigma^*$ are then given by the points $(-1,0)$, $(0, -1)$, 
$\left( \frac{1}{\lambda-2}, \frac{1}{\lambda-2} \right)$. Notice that $\Sigma^*$ is in general a rational polytope and it is a lattice polytope if and only if $\lambda=3$.
\end{ex}

A lattice polytope $\Delta$ for which the dual $\Delta^*$ is again a lattice polytope is called {\it reflexive}. 
It is well-known that reflexive polytopes correspond to toric (Fano) varieties with the anticanonical polarization (see, e.g. \cite{Bat}). For example, the above example shows that $3\Sigma_{2} \subseteq \R^2$ is reflexive and, in general,  the simplex  $\lambda \Sigma_{n} \subseteq \R^n$ described in the introduction is reflexive if and only if $\lambda = n+1$ (indeed  the anticanonical bundle of $\C P^n$ is given by $O(n+1)$).
An interesting   criterium for a lattice polytope $\Delta \subseteq \R^n$ in order to be reflexive in terms of the Ehrhart polynomial is given by the following theorem, due to Hibi \cite{Hibimain} (see also Theorem 3 and Remark 2.1 in \cite{St}).

\begin{theor}\label{theorcriteriumREFL}
A lattice polytope $\Delta \subseteq \R^n$ with $\bar 0 \in \Delta$
is reflexive  if and only if
$$P_{\Delta}(m) = (-1)^{n} P_{\Delta}(-m-1)$$
for every $m \in \N$.
\end{theor}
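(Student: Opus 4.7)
The plan is to reduce the polynomial symmetry to an elementary counting identity via Ehrhart--Macdonald reciprocity, and then to translate that identity into the integrality of the facet distances of $\Delta$ that characterizes reflexivity. Recall Ehrhart--Macdonald reciprocity: for any $n$-dimensional lattice polytope $\Delta\subseteq\R^n$,
\begin{equation*}
(-1)^n P_\Delta(-m)=\sharp\bigl(\operatorname{int}(m\Delta)\cap\Z^n\bigr),\qquad m\in\Z^+,
\end{equation*}
and the identity extends polynomially in $m$. Substituting $m+1$ for $m$ turns the claimed symmetry $P_\Delta(m)=(-1)^n P_\Delta(-m-1)$ into the counting equality
\begin{equation}\label{countHibi}
\sharp(m\Delta\cap\Z^n)=\sharp\bigl(\operatorname{int}((m+1)\Delta)\cap\Z^n\bigr),\qquad m\in\N.
\end{equation}
So it suffices to show that \eqref{countHibi} is equivalent to $\Delta$ being reflexive.

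For the forward direction, assume $\Delta$ is reflexive, so that $0\in\operatorname{int}(\Delta)$ and the primitive integer vectors $a_1,\dots,a_k\in\Z^n$ (namely the vertices of $\Delta^*$) satisfy
\begin{equation*}
\Delta=\{x\in\R^n\mid \langle a_i,x\rangle\leq 1,\ i=1,\dots,k\}.
\end{equation*}
Then $\operatorname{int}((m+1)\Delta)\cap\Z^n=\{x\in\Z^n\mid\langle a_i,x\rangle<m+1\}$; since $\langle a_i,x\rangle\in\Z$ whenever $x\in\Z^n$, this set equals $\{x\in\Z^n\mid\langle a_i,x\rangle\leq m\}=m\Delta\cap\Z^n$, and \eqref{countHibi} holds as a set equality.

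For the converse, assume \eqref{countHibi}. Setting $m=0$ gives $\sharp(\operatorname{int}(\Delta)\cap\Z^n)=1$, so after identifying this interior lattice point with $0$ one has the integer facet description $\Delta=\{x\mid\langle a_i,x\rangle\leq b_i\}$ with $a_i$ primitive and $b_i\in\Z_{\geq 1}$. Since $\langle a_i,x\rangle\in\Z$ for $x\in\Z^n$, the inclusion $m\Delta\cap\Z^n\subseteq\operatorname{int}((m+1)\Delta)\cap\Z^n$ is automatic (any $x\in\Z^n$ with $\langle a_i,x\rangle\leq mb_i$ also satisfies $\langle a_i,x\rangle\leq(m+1)b_i-1$), and by \eqref{countHibi} this inclusion is a set equality. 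Now for a fixed facet normal $a_{i_0}$ and $m$ large, one exhibits a lattice point $y$ on the hyperplane $\langle a_{i_0},x\rangle=(m+1)b_{i_0}-1$ lying in $\operatorname{int}((m+1)\Delta)$. The set equality forces $y\in m\Delta$, so $(m+1)b_{i_0}-1\leq mb_{i_0}$, yielding $b_{i_0}=1$; doing this for every facet gives $\Delta=\{\langle a_i,x\rangle\leq 1\}$, which is reflexive.

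The main obstacles are two mild technicalities in the converse: first, deducing from $0\in\Delta$ and $\sharp(\operatorname{int}(\Delta)\cap\Z^n)=1$ that $0$ itself is the unique interior lattice point, rather than merely obtaining reflexivity up to an integer translation; and second, producing the lattice point $y$ at the extremal level $(m+1)b_{i_0}-1$ that simultaneously lies in the interior of $(m+1)\Delta$. The latter uses primitivity of $a_{i_0}$ (so that lattice points exist on every integral translate of the facet hyperplane) combined with taking $m$ sufficiently large so that such a point can be chosen far from the other facets of $(m+1)\Delta$.
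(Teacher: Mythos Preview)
The paper does not actually prove this theorem: it is quoted as a result of Hibi \cite{Hibimain} (with a secondary reference to \cite{St}) and used as a black box in the proof of part~(iii) of Theorem~\ref{mainteor}. So there is no ``paper's own proof'' to compare against.

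Your argument is the standard one and is essentially correct. The reduction via Ehrhart--Macdonald reciprocity to the counting identity $\sharp(m\Delta\cap\Z^n)=\sharp(\operatorname{int}((m+1)\Delta)\cap\Z^n)$ is exactly right, and the forward direction is clean. In the converse, the inclusion $m\Delta\cap\Z^n\subseteq\operatorname{int}((m+1)\Delta)\cap\Z^n$ together with equality of cardinalities indeed forces set equality, and your extraction of $b_{i_0}=1$ from a well-placed lattice point on the level set $\langle a_{i_0},x\rangle=(m+1)b_{i_0}-1$ is the right idea. Both technicalities you flag are genuine but fillable: for the second, one makes precise that $H\cap\Z^n$ is a full-rank affine sublattice of the hyperplane $H$ (primitivity of $a_{i_0}$), while $H\cap\operatorname{int}((m+1)\Delta)$ contains a ball of radius growing linearly in $m$, so a lattice point exists for $m\gg0$.

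One remark on the first technicality: as you implicitly noticed, the hypothesis ``$\bar 0\in\Delta$'' in the paper's statement is slightly too weak for the literal conclusion. For instance $\Delta=[0,2]\subset\R$ satisfies the functional equation but has $\bar 0$ on its boundary and is not reflexive in the strict sense; only its translate $[-1,1]$ is. Your proof correctly yields reflexivity up to an integral translation, which is all that is needed (and all that is used) later in the paper --- indeed, in the proof of (iii) the authors themselves translate $\Delta$ so that the unique interior lattice point sits at the origin.
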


The Ehrhart polynomial can be considered a particular case in a more general class of functions defined on the dilations of a polytope. Let $\phi$ be a polynomial map homogeneous of degree $k$ on $\R^n$ and consider the function

\begin{equation}\label{genn}
P_{\Delta}^{\phi}(m):= \sum_{x \in (m\Delta) \cap \Z^n} \phi(x)
\end{equation}

The Ehrhart polynomial corresponds to choosing $\phi$ constant equal to $1$. One can show that, analogously to the Ehrhart case, one always gets a polynomial on $m$ of degree $n+k$ and that the following property  holds
\begin{equation}\label{reciprocPHI}
(-1)^{n+k} P^{\phi}_{\Delta}(-m) = P_{\Delta^0}^{\phi}(m):=\sum_{x \in (m\Delta)^0 \cap \Z^n} \phi(x),
\end{equation}  
where $(m\Delta)^{\circ}$ denotes the interior of $m\Delta$
(see \cite[Proposition 4.1]{BrionVergne}).
In particular when $\phi (x)=1$ one gets 
\begin{equation}\label{reciproc}
(-1)^n P_{\Delta}(-m) = \sharp((m\Delta)^{\circ} \cap \Z^n)
\end{equation}
This is  called {\em Ehrhart-MacDonald reciprocity} (for a direct  proof, see e.g.  \cite[Theorem 2]{St}).

\begin{rmk}\rm\label{onepoint}
Notice that  by (\ref{reciproc}) and Theorem \ref{theorcriteriumREFL} with $m=0$ one deduces that a reflexive polytope has exactly one lattice point in its interior.
\end{rmk}

On the other hand, by choosing $\phi(x) = x$, one can deduce  the following:
\begin{equation}\label{phi=x}
P_{\Delta}^{x}(m):= \sum_{x \in (m\Delta) \cap \Z^n} x =\left( \int_{\Delta} x dv \right) m^n +  \left( \frac{1}{2}\int_{\partial \Delta} x da \right) m^{n-1} + \cdots
\end{equation}

Let us recall that the barycenter $b_{\Delta}$ of a polytope $\Delta \subseteq \R^n$ is the point of $\R^n$ given by:
$$b_{\Delta} =\frac{\int_{\Delta} x  dv}{\Vol(\Delta)}$$

so the leading coefficient of $P_{\Delta}^{x}(m)$ is $\Vol(\Delta) \cdot b_{\Delta}$.
The  coefficient of $m^{n-1}$ is a multiple  of the barycenter of the boundary $\partial \Delta$ of $\Delta$.
It is interesting to notice (even if it is not used in the rest of the paper) that these two barycenters are in general different: they coincide if and only if the  so-called  {\it Futaki invariant} of $(M,L)$ vanishes (see \cite{LeBr}).

We conclude this section with the following result needed in the proof of (iii) of Theorem \ref{mainteor}.

\begin{lem}\cite[Theorem 1.4]{NP}\label{lemfond}
Let $\Delta \subseteq \R^n$ be a convex polytope such that:
\begin{enumerate}
\item[(a)] $\Delta$ is contained in the dual $D^*$ of a lattice polytope $D$;
\item[(b)] the barycenter $b_\Delta$ of $\Delta$ is the origin, i.e.  $b_\Delta=\bar 0$.
\end{enumerate}

Then $\Vol(\Delta) \leq \frac{(n+1)^n}{n!}$ and the equality is attained if and only if $\Delta = (n+1)\Sigma_{n}$.
\end{lem}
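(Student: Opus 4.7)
The plan is to dualize condition (a), reduce to the case where $D$ is a lattice simplex, and then extract the sharp constant from the centroid condition together with a lattice volume bound.

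First, the containment $\Delta\subseteq D^{*}$ is equivalent to $D\subseteq\Delta^{*}$, so the polar body $\Delta^{*}$ contains a full-dimensional lattice polytope. Since shrinking $D$ enlarges $D^{*}$ and weakens the hypothesis on $\Delta$, by Carath\'eodory (applied to an interior lattice point of $D$) I may assume $D$ is a lattice simplex with vertices $v_{0},\dots,v_{n}\in\Z^{n}$ and $0\in\mathrm{int}(D)$. Let $\lambda_{0},\dots,\lambda_{n}>0$ be the barycentric coordinates of the origin, so $\sum\lambda_{i}=1$ and $\sum\lambda_{i}v_{i}=0$.

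Second, I would exploit the centroid hypothesis (b) via the Gr\"unbaum-Radon inequality: for a convex body $K\subseteq\R^{n}$ with centroid at the origin and any direction $u$, one has $-\min_{x\in K}\langle x,u\rangle\leq n\cdot\max_{x\in K}\langle x,u\rangle$. Combined with the support bound $\langle x,v_{i}\rangle\leq 1$ on $\Delta$ coming from (a), this upgrades to the two-sided bound $-n\leq\langle x,v_{i}\rangle\leq 1$ on $\Delta$ for every $i$. Hence $\Delta$ is contained in the polytope $P:=D^{*}\cap(-D/n)^{*}$. Changing variables to $t_{i}=\langle x,v_{i}\rangle$ and using the linear dependence $\sum\lambda_{i}t_{i}=0$, one sees that $\Vol(P)=\Vol(R)/|\det M|$, where $M=[v_{1}|\cdots|v_{n}]$ and $R\subset[-n,1]^{n}$ is the slice cut out by the extra slab $-\lambda_{0}\leq\sum_{i\geq 1}\lambda_{i}t_{i}\leq n\lambda_{0}$ encoding the constraint on $t_{0}$.

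Third, the sharp constant arises from combining $|\det M|\geq 1$ (lattice) with the lower bound $\Vol(D)\geq(n+1)/n!$ valid for any lattice simplex with $0$ in its interior. This lower bound follows from the decomposition $\Vol(D)=\sum_{k=0}^{n}\Vol(D_{k})$, where $D_{k}=\mathrm{conv}(v_{0},\dots,\hat v_{k},\dots,v_{n},0)$ is a lattice simplex of volume at least $1/n!$, giving $\Vol(D)\geq(n+1)/n!$ with equality iff every $D_{k}$ is unimodular and $0$ is the centroid of $D$. A direct computation of $\Vol(R)$ as a function of the $\lambda_{i}$'s and $|\det M|$ then yields $\Vol(\Delta)\leq\Vol(P)\leq(n+1)^{n}/n!$, with equality attained precisely when $\lambda_{i}=1/(n+1)$ for every $i$, the matrix $M$ is unimodular, and the Gr\"unbaum-Radon bound is saturated in every direction $v_{i}$.

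The hardest step is the last computation: one must verify that the slab-in-cube volume $\Vol(R)/|\det M|$ is indeed maximised at the extremal configuration described above, rather than at some other arrangement of $\lambda_{i}$'s and $v_{i}$'s (non-extremal cases such as $\lambda$ very lopsided can, a priori, give a larger $\Vol(D^{*})$, but one checks that the Gr\"unbaum-Radon slab then cuts away enough to compensate). Once this is established, equality in the lemma forces $\Delta=P$ to coincide with $D^{*}$, and by the classification of unimodular lattice simplices with centroid at the origin (equivalent under $\operatorname{SL}_{n}(\Z)$ to $\mathrm{conv}(-e_{1},\dots,-e_{n},e_{1}+\cdots+e_{n})$), the polytope $\Delta$ is unimodularly equivalent to $(n+1)\Sigma_{n}$ translated so that its centroid lies at the origin.
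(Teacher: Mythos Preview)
The paper does not prove this lemma: it is simply quoted from Nill--Paffenholz \cite[Theorem~1.4]{NP} and used as a black box in the proof of Theorem~\ref{mainteor}(iii). So there is no ``paper's own proof'' against which to compare.

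Evaluating your outline on its own merits, the first two steps are sound: the reduction to a lattice simplex $D$ via Carath\'eodory is legitimate (shrinking $D$ enlarges $D^{*}$, so the hypothesis is preserved), and the Minkowski--Radon inclusion $-\Delta\subseteq n\Delta$ for a body with centroid at the origin does give the two-sided bound $-n\le\langle x,v_i\rangle\le 1$ on $\Delta$. Your observation that $\Vol(D_k)=\lambda_k\Vol(D)$, hence equality in $\Vol(D)\ge(n+1)/n!$ forces all $\lambda_k=1/(n+1)$, is also correct.

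The genuine gap is the step you yourself flag as ``the hardest''. You assert that $\Vol(R)/|\det M|\le (n+1)^n/n!$ but never establish it, and the ingredients you list do not obviously combine to give it. Concretely, $\Vol(R)$ depends only on the barycentric weights $\lambda_i$, while $|\det M|$ depends on the actual lattice vectors $v_i$; the two are linked only through the constraint that each sub-simplex $D_k$ is a lattice simplex, i.e.\ $|\det M|=n!\lambda_0\Vol(D)$ and $\Vol(D)\ge 1/(n!\min_k\lambda_k)$. If one lets $\lambda_0\to 1$ (so $0$ is near the vertex $v_0$), the slab constraints on $R$ become vacuous and $\Vol(R)\to(n+1)^n$, which exceeds $(n+1)^n/n!$; the inequality can only be rescued because such configurations force $|\det M|$ to be large. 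You have not shown that this compensation always occurs, nor indicated how the maximisation over all admissible $(\lambda,M)$ is to be carried out. Without this, the argument is a plausible heuristic rather than a proof, and the equality analysis that follows it (saturating Minkowski--Radon in every direction $v_i$ simultaneously) is likewise not justified.
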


\section{The proof of Theorem \ref{mainteor}}\label{proofs}
Let $(M, L)$ be a polarization of a compact complex manifold $M$.
In order to analyze the existence of a constant scalar curvature \K\  (cscK) metric  $\omega\in c_1(L)$ many concepts  of  {\em  stability} of $(M, L)$ have been introduced.
Here we recall the following (starting from the weaker to 
 the stronger assumption) 1. {\it asymptotically Chow semistability (ACSemiS)}, 2. {\it asymptotically Chow polystability (ACpolS)}. and  3. {\it asymptotically Chow stability (ACS)}. The definitions of these notions are omitted since they  require some technical tools such as that of test configuration and Chow weight not treated in the present paper (the interested reader is referred  to \cite{DVZ} for details).
 From the point of view of Geometric Invariant Theory,  the ACpolS of $(M, L)$ is equivalent to the existence of a balanced metric $\omega_m \in c_1(L^m)$ for any $m >>0$. 
Following Donaldson \cite{do} (see also \cite{arlcomm}) one says  that a \K metric $\omega_m\in c_1(L^m)$ on a compact polarized manifold $(M,L)$ is said to be {\em balanced} if the so called {\it Kempf distortion function} 
$$T_{\omega_m}(p) = \sum_{j=0}^{N_m} h_m(s_j(p), s_j(p))$$ is a positive  constant, 
where $h_m$ is a hermitian metric on $L^m$ such that $Ric(h_m) =\omega_m$ and $s_0, \dots, s_N$ form a orthonormal basis of $H^0(M,L^m)$ with respect to the $L^2$-scalar product $\langle s, t \rangle = \int_M h_m(s(p), t(p)) \frac{\omega^n}{n!}$.
Another notion stronger than ACpolS is the following.  A polarization $(M, L)$
of a compact complex manifold $M$ is said to be a  {\em regular quantization} (originally introduced in \cite{cgr2}, see also \cite{arlhom})  if there exists
$\omega	\in c_1(L)$ such that $m\omega$ is balanced for  $m >>0$.
Of course a regular quantization is ACpolS and hence ACsemiS. 
Hence Theorem \ref{mainteor} immediately  implies the following:

\begin{cor}\label{cormain}
Let $\Delta\sim(n+1)\Sigma_{n}$ and  assume that $(M, L)$ is a regular quantization. 
Then $(M,L) \simeq (\C P^n, O(n+1))$.
\end{cor}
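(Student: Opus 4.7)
My plan is to deduce Corollary \ref{cormain} directly from part (iii) of Theorem \ref{mainteor}. Indeed, the only content of the corollary beyond the theorem is the implication ``$(M,L)$ is a regular quantization $\Rightarrow$ $(M,L)$ is ACsemiS''; once that is in hand, the corollary is immediate.

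Unpacking the definitions, if $(M,L)$ is a regular quantization then there exists a \K\ metric $\omega\in c_1(L)$ such that, for every sufficiently large integer $m$, the rescaled metric $m\omega\in c_1(L^m)$ is balanced in Donaldson's sense, i.e.\ its Kempf distortion function $T_{m\omega}$ is a positive constant on $M$. By the standard correspondence recalled in Section \ref{proofs} between the existence of a balanced metric in $c_1(L^m)$ and Chow polystability of $(M,L^m)$, the existence of such a balanced metric for every $m\gg 0$ amounts exactly to $(M,L)$ being asymptotically Chow polystable. In particular $(M,L)$ is ACsemiS, so the hypothesis of Theorem \ref{mainteor}(iii) is satisfied and the conclusion $(M,L)\simeq (\C P^n, O(n+1))$ follows at once.

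From this perspective the corollary presents no independent obstacle: the chain ``regular quantization $\Rightarrow$ ACpolS $\Rightarrow$ ACsemiS'' is formal, and the real geometric work — translating the ACsemiS hypothesis into the barycentric condition $b_{\Delta}=\bar 0$ needed to apply Lemma \ref{lemfond} and conclude $\Delta=(n+1)\Sigma_{n}$ — has already been absorbed into the proof of Theorem \ref{mainteor}(iii). Thus the main (and only) thing to record in the proof of the corollary is precisely that regular quantization fits into the stability hierarchy above ACsemiS.
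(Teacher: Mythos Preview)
Your proposal is correct and matches exactly the paper's derivation: the corollary is stated as an immediate consequence of Theorem~\ref{mainteor}(iii) via the chain ``regular quantization $\Rightarrow$ ACpolS $\Rightarrow$ ACsemiS'', which the paper records just before stating the corollary. The paper also sketches, in a subsequent remark, an alternative direct route (reflexivity forces $L=-K_M$, regular quantization yields a cscK and hence K\"ahler--Einstein metric, and one concludes via \cite[Proposition~1.3]{NP}), but your argument is the one the paper actually uses to deduce the corollary.
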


\begin{rmk} \rm
Actually  an alternative and more direct  proof of Corollary \ref{cormain} can be obtained as follows. 
By $\Delta\sim (n+1)\Sigma_n$ we deduce as in the proof below of (iii) in Theorem \ref{mainteor} that  the polytope $\Delta$ is reflexive and hence $L$ is the anticanonical bundle.
Let $\omega\in c_1(L)$ be a \K\ metric such that $m\omega$ is balanced for  $m >>0$. 
Then $\omega\in c_1(L)$ is a cscK metric (see \cite{Lreg} for a proof) and hence  Kahler-Einstein (being  a  cscK metric in the first Chern class of the anticanonical bundle).
Then, the conclusion follows immediately by Proposition 1.3 of \cite{NP}.
\end{rmk}

\begin{rmk} \rm
Notice that the assumption $\Delta\sim(n+1)\Sigma_n$ in (iii) of Theorem \ref{mainteor} cannot be dropped even if one assumes the stronger assumptions of  ACS or regular quantization.   For example,
the blow-up of $\C P^2$ at three points endowed with the anticanonical polarization is an
 ACS compact smooth toric manifold (see Corollary 3.3 in \cite{LeeLietc}).
 For another example let us  consider Example \ref{primaprop} above, where $\Delta\sim \lambda\Sigma_n$ with $\lambda\neq n+1$.
 It is not hard to see that in this case the polarization $(M, L)$ is indeed regular being a flag manifold, i.e. a  homogeneous and simply-connected \K\ manifold (see \cite{arlcomm} for a proof).
 We believe that if $M$ is a compact toric manifold admitting  a regular quantization  
then $M$ is isomorphic to the \K\ product of complex projective spaces.
\end{rmk}

The following example shows the necessity of ACsemiS in Theorem \ref{mainteor}.
\begin{ex}\rm\label{exnc}
Consider the lattice polytope $\Delta$ in $\R^5$ with vertices
$$(0,-1,-1,-1,-1), (-1,0,-1,-1,-1), (-1,-1,-1,-1,-1,), (-1,0,0,-1,-1),$$
$$(-1,-1,0,-1,-1), (0,-1,0,-1,-1), (0,-1,0,2,-1), (-1,-1,-1,2,-1), $$
$$(6,-1,-1,-1,2), (-1,6,3,-1,2), (6,-1,3,-1,2),(-1,-1,0,2,-1),$$ 
$$(-1,-1,3,-1,2), (-1,0,0,2,-1), (-1,0,-1,2,-1),(-1,6,-1,-1,2),$$
$$(0,-1,-1,2,-1), (-1,-1,-1,-1,2).$$
This polytope represents a compact smooth toric manifold $M$  (different from $\C P^5$)  polarized by the anticanonical bundle $L=-K_M$  and $\Delta\sim 6\Sigma_5$ (this can be checked by calculating the Ehrhart polynomials of the corresponding polytopes by the software Normaliz, https://www.normaliz.uni-osnabrueck.de/; see also the database on smooth toric Fano varieties http://www.grdb.co.uk/forms/toricsmooth, setting dimension 5 and degree $(n+1)^n = 6^5$). We thank Akiyoshi Tsuchiya for pointing out to us this example. 
Notice that $(M, L)$ is not ACsemiS by Theorem \ref{mainteor}.
\end{ex}

A necessary condition needed in the proof of (iii) of Theorem \ref{mainteor}
for the   ACsemiS of a polarized toric manifold $(M,L)$  can be expressed as follows in terms of the Delzant polynomial $\Delta$ associated to $(M, L)$
and its barycenter.

\begin{lem}(\cite[Theorem 1.2]{Ono})\label{ACS}
Assume that $(M, L)$ is  ACsemiS. Then for any integer $m \in \Z$
one has
\begin{equation}\label{yots} 
P_{\Delta}^{x}(m) = P_{\Delta}(m)b_\Delta,
\end{equation} 
where $b_\Delta$ is the barycenter of $\Delta$ and  we take $\phi (x)=x$ in \eqref{genn}.
\end{lem}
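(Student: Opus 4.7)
The plan is to derive the identity from the standard combinatorial translation of asymptotic Chow semistability for polarized toric manifolds, as pioneered by Futaki and Ono. The two key inputs are: (i) the description of $H^0(M, L^m)$ as a representation of the algebraic torus $T = (\C^*)^n$ acting on $M$, whose weights are exactly the lattice points of $m\Delta$, each with multiplicity one (so $\dim H^0(M, L^m) = P_\Delta(m)$); and (ii) the Hilbert--Mumford numerical criterion applied to one-parameter subgroups of $T$.

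With these in hand I would proceed as follows. For each $v \in \Z^n$, the associated one-parameter subgroup acts on $H^0(M, L^m)$ with total weight $\langle v, P_\Delta^x(m) \rangle$. Via equivariant Riemann--Roch combined with Atiyah--Bott (or Duistermaat--Heckman) localization on the toric variety, the Chow weight of the $m$-th Kodaira embedding of $M$ along this one-parameter subgroup can be expressed, up to a positive factor independent of $v$, as
\[
\bigl\langle v,\; P_\Delta^x(m) - P_\Delta(m)\, b_\Delta \bigr\rangle.
\]
Asymptotic Chow semistability requires this quantity to be non-positive for every $v \in \Z^n$ and every $m \gg 0$; applying the inequality simultaneously to $v$ and $-v$ then forces it to vanish. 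Since the resulting scalar identity holds for every $v \in \Z^n$, and hence for every $v \in \R^n$, it upgrades to the vector-valued equality $P_\Delta^x(m) = P_\Delta(m)\, b_\Delta$. Because both sides are polynomials in $m$ (by \eqref{genn} and \eqref{phi=x}), an identity that holds for all sufficiently large integers automatically extends to every $m \in \Z$.

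The main obstacle, and the technical heart of Ono's proof, lies in establishing the precise combinatorial form of the Chow weight displayed above. This requires combining the equivariant Riemann--Roch theorem on the toric variety with an Euler--MacLaurin type expansion relating weighted lattice-point sums to polytope moments, in the spirit of the Brion--Vergne framework already invoked to obtain \eqref{reciprocPHI}. Once this dictionary between the algebro-geometric Chow weight and the lattice data of $\Delta$ is in place, the vector identity drops out essentially by the natural duality between one-parameter subgroups of $T$ and linear functions on $\R^n$.
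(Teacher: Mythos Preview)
The paper does not give its own proof of this lemma: it is quoted verbatim as \cite[Theorem~1.2]{Ono} and used as a black box in the proof of part~(iii) of Theorem~\ref{mainteor}. There is therefore no in-paper argument to compare your proposal against.

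That said, your sketch is a faithful outline of Ono's own strategy: identify the weights of the $T$-action on $H^0(M,L^m)$ with the lattice points of $m\Delta$, express the Chow weight of the $m$-th Kodaira embedding along a one-parameter subgroup $\lambda_v\subset T$ as a pairing $\langle v,\,\cdot\,\rangle$ with a vector built from lattice-point sums and polytope moments, apply Hilbert--Mumford to $\pm v$ to force equality, and then extend from $m\gg 0$ to all $m$ by polynomiality. You are right to flag the exact Chow-weight computation as the technical core; in Ono's paper it is carried out via the Gelfand--Kapranov--Zelevinsky description of the Chow form for toric varieties rather than equivariant Riemann--Roch per se, but the upshot is the same combinatorial identity. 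Note also that Ono's theorem actually yields the identity for every homogeneous $\phi$, not only the linear case $\phi(x)=x$ recorded here; the present paper only needs the linear case at $m=-1$.
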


Now we are ready to prove Theorem \ref{mainteor}.

\begin{proof}[Proof of Theorem \ref{mainteor}]
\vskip 0.1cm
{\em Proof of (i)}
The assumption  $P_{\Delta}(m)=P_{\Sigma_n}(m)$ implies in particular that $\Vol (\Delta)=\Vol (\Sigma_n)=\frac{1}{n!}$ .
But, since $\Delta$ is a lattice polytope, it is easy to see that this implies that $\Delta$ must coincide with the simplex $\Sigma_n$.
Indeed, since $(M, L)$ is a toric manifold, $\Delta$ is a Delzant polytope and, up to an $SL_n(\Z)$ transformation and a translation by a lattice vector we can assume that $\bar 0$ is a vertex of $\Delta$ and that the edges at $\bar 0$ are given by the directions of $e_1, \dots, e_n$, i.e. $\Delta$ must contain $c_1 e_1, \dots, c_n e_n$ as vertices, for some $c_1, \dots, c_n \in \Z^+$. Since $\Delta$ is a convex lattice polytope, it must contain the simplex $\Sigma$ given by the convex hull of $\bar 0, c_1 e_1, \dots, c_n e_n$, and then

$$\frac{1}{n!} = \Vol(\Delta) \geq \Vol(\Sigma) = \frac{c_1 \cdots c_n }{n!}$$

Then $c_1 = \cdots = c_n = 1$, i.e. $\Delta$ must contain $e_1, \dots, e_n$ as vertices and, since its volume coincide with that of $\Sigma_n$, it cannot have other points, i.e. $\Delta = \Sigma_n$ and then (i) 
is proved.

\vskip 0.1cm
{\em Proof of (ii)}
By setting $n=2$ and $\lambda=3$ in \eqref{projlambda} (Example \ref{expro} in the introduction)  one gets
$$
P_\Delta (m)=P_{3\Sigma_2}(m)=\frac{(3 m+1)(3 m+2)}{2}
$$
In particular 
$$\Vol (\Delta)=\frac{9}{2}$$
and 
$$P_{\Delta}(-1)=P_{3\Sigma_{2}}(-1)=(-1)^2=1.$$
Combining the latter with Ehrhart-MacDonald reciprocity \eqref{reciproc} one deduces that  
$\Delta$ has exactly one lattice point in its interior.
Then the desired conclusion, namely the isomorphism $(M, L)\cong (\C P^2, O(3))$, follows by 
\cite[Corollary 4.2]{PhDBall} which states that {\em for any two-dimensional polytope $\Delta$ with exactly one point in its interior the volume $\Vol(\Delta)$ must be less or equal to $\frac{9}{2}$ and the equality holds if and only if $\Delta = 3 \Sigma_{2}$}.

\vskip 0.1cm
{\em Proof of (iii)}
We show  that $(a)$ and $(b)$ in Lemma \ref{lemfond} hold for the polytope $\Delta$ representing $(M, L)$. By the assumption
$\Delta\sim(n+1)\Sigma_{n}$  and the reflexivity of  $(n+1)\Sigma_n$  we deduce that 
$$P_{\Delta}(m) = P_{(n+1)\Sigma_{n}}(m) = (-1)^{n} P_{(n+1)\Sigma_{n}}(-m-1) =(-1)^n P_{\Delta}(-m-1),$$
which, by Theorem \ref{theorcriteriumREFL}, yields the reflexivity of $\Delta$.
Thus, by the very definition of reflexivity  $\Delta^*$ is a lattice polytope and hence  $(a)$ of Lemma \ref{lemfond} is satisfied with $D = \Delta^*$ (since $\Delta = (\Delta^*)^*$).

Notice now that by Remark \ref{onepoint} the polytope  $\Delta$ has only one lattice point in the interior and so, up to an integral translation (which does not change the Ehrhart polynomial), 
we can assume that this point is the origin $\bar 0$. Then, by (\ref{reciprocPHI}) 
$$(-1)^{n+1} P^x_\Delta (-1)=P^x_{\Delta^0}(1)=\bar 0.$$
Using  the assumption of ACsemiS and (\ref{yots}) in Lemma \ref{ACS} with $m=-1$
we get
$$\bar 0 = P_{\Delta}^{x}(-1) = P_{\Delta}(-1)b_\Delta.$$
On the other hand,  by (\ref{reciproc}) with $m=1$, we have 
$$P_{\Delta}(-1) = (-1)^n \sharp(\Delta^0 \cap \Z^n) = (-1)^n \neq 0,$$
 and then one concludes $b_\Delta= \bar 0$. This shows that also $(b)$ of Lemma \ref{lemfond} is satisfied.
Thus, since $\Delta\sim (n+1)\Sigma_n$ implies
$\Vol (\Delta)=\Vol ((n+1)\Sigma_n)=\frac{(n+1)^n}{n!}$ (cfr. \eqref{volumes}) 
Lemma \ref{lemfond}  yields  the desired equality $\Delta = (n+1)\Sigma_{n}$, i.e. $(M, L)\cong (\C P^n, O(n+1))$. 
The proof of (iii) and hence of the theorem is concluded.
\end{proof}

\end{document}